\title{Univalent completion}
\author{Benno van den Berg$^1$}
\address{${}^1$ ILLC, Universiteit van Amsterdam, P.O. Box 94242, 1090 GE Amsterdam, the Netherlands. E-mail: bennovdberg@gmail.com.}
\author{Ieke Moerdijk$^2$}
\address{${}^2$ Radboud Universiteit Nijmegen, Institute for Mathematics, Astrophysics, and Particle Physics, Heyendaalseweg 135, 6525 AJ Nijmegen, the Netherlands. E-mail: i.moerdijk@math.ru.nl.}
\date{\today}
\begin{document}

\maketitle

\begin{abstract}
We review the concept of a univalent fibration and show by elementary means that every Kan fibration in simplicial sets can be embedded in a univalent Kan fibration.
\end{abstract}

\section{Introduction}

A Kan fibration $E \to U$ in the category of simplicial sets is universal in case every other Kan fibration $Y \to X$ satisfying some size restrictions is a (homotopy) pullback. In this context, a typical such size restriction is to require $Y \to X$ to have fibers of cardinality strictly less than a fixed regular cardinal. Such a universal fibration is said to be univalent if the path space of $U$ is equivalent to the space of equivalences between fibers of $E \to U$; see Section 5 below for a precise formulation. This notion of univalence was introduced by Voevodsky in the context of modelling universes in Martin-L\"of style type theories. Subsequently, Voevodsky and others proved that particular constructions of universal fibrations in simplicial sets or in related categories are indeed univalent (see, for example, \cite{cisinski14,gepnerkock15,kapulkinetal14,shulman15b,shulman15}).

Rather than focussing on specific such universal fibrations, the purpose of this note is to show that any fibration can be embedded into a univalent one, the latter being universal whenever the former is.

Although we were partially motivated by work on homotopy theoretic models of type theory, we wish to emphasize that our paper uses only basic definitions and results of the homotopy theory of simplicial sets, and is otherwise entirely self-contained.

\section{Notation and background on groupoids}

In this section we fix some notation and terminology concerning groupoids, actions by groupoids on other objects (i.e., representations), nerves, et cetera.

\subsection{Actions} For a groupoid $\mathbb{G}$, we write
\diag{ {\rm ar}(\mathbb{G}) \ar@<1ex>[r]^{s} \ar@<-1ex>[r]_t & {\rm ob}(\mathbb{G}) }
for the sets of arrows and objects of $\mathbb{G}$, and the source and target maps between them. Composition is denoted $m: {\rm ar}(\mathbb{G}) \times_{{\rm ob}(\mathbb{G})} {\rm ar}(\mathbb{G}) \to {\rm ar}(\mathbb{G})$. A (left) action of $\mathbb{G}$ on a set $X$ consists of maps $\pi: X \to {\rm ob}(\mathbb{G})$ and $\alpha: {\rm ar}(\mathbb{G}) \times_{{\rm ob}(\mathbb{G})} X \to X$ satisfying the usual identities. We usually write $g \cdot x$ for $\alpha(g, x)$ where $g: c \to d$ is an arrow in $\mathbb{G}$ and $x$ is an element of $X$ with $\pi(x) = c$. This action gives rise to a new groupoid denoted $X_\mathbb{G}$, with ${\rm ob}(X_\mathbb{G}) = X$ and ${\rm ar}(X_\mathbb{G}) = {\rm ar}(\mathbb{G}) \times_{{\rm ob}(\mathbb{G})} X$. In fact, the arrows $x \to y$ in $X_\mathbb{G}$ are the arrows $g: \pi x \to \pi y$ in $\mathbb{G}$ with $g \cdot x = y$. This groupoid is known as the \emph{action groupoid} for the action of $\mathbb{G}$ on $X$. It comes equipped with a functor (again) denoted $\pi: X_\mathbb{G} \to \mathbb{G}$. Note that for each object $c$ in $\mathbb{G}$, there is a pullback of groupoids
\diag{ X_c \ar[d] \ar[r] & X_\mathbb{G} \ar[d]^\pi \\
1 \ar[r]_c & \mathbb{G} }
where $X_c = \pi^{-1}(c) \subseteq X$ is viewed as a discrete groupoid (having identity arrows only).

\subsection{Nerves} For a groupoid $\mathbb{G}$, we write $N(\mathbb{G})$ for its nerve. It is the simplicial set whose $n$-simplices are strings of composable arrows
\diaglab{nerve}{ c_0 \ar[r]^{g_1} & c_1 \ar[r] & \ldots \ar[r]^{g_n} & c_n }
in $\mathbb{G}$. This definition of the nerve makes sense for any category, not just for groupoids. But for us, it will be convenient to observe that for a groupoid $\mathbb{G}$, the data \refdiag{nerve} can equivalently be represented in the form
\diaglab{nerveagain}{ & & c_k \ar@{.>}[dr] \ar[dl]^{h_1} \ar[dll]_{h_0} \ar[drrr]^{h_n} \\
c_0 & c_1 & \ldots & \hat{c_k} & \ldots & c_n }
where $h_i: c_k \to c_i$ is the appropriate composition of $g_i$'s and their inverses. This representation has the advantage that all the faces $d_i: N(\mathbb{G})_n \to N(\mathbb{G})_{n-1}$ other that $d_k$ are simply given by deleting the arrow $c_k \to c_i$. (This observation is related to the fact that $N(\mathbb{G})$ is in fact a \emph{symmetric} simplicial set, see, for instance, \cite{grandis01}.)

If $\mathbb{G}$ acts on $X$, then $N(X_\mathbb{G})_n = X \times_{{\rm ob}(\mathbb{G})} N(\mathbb{G})_n$. This holds for both presentations \refdiag{nerve} and \refdiag{nerveagain}. Thus, an $n$-simplex in $N(X_\mathbb{G})$ can be represented as a pair
\begin{equation} \label{no3}
 (x, c_0 \to c_1 \to \ldots \to c_n)
\end{equation}
with $\pi(x) = c_0$, similar to \refdiag{nerve}, or as a pair
\begin{equation} \label{no4}
\xymatrix@R=1em{  & c_k \ar[dr] \ar@{.>}[d] \ar[dl] \\ (x, c_0 & \hat{c_k} & c_n )}
\end{equation}
with $\pi(x) = c_k$, as in \refdiag{nerveagain}.

\subsection{Simplicial groupoids} The definitions above make sense in any ambient category \ct{E} with pullbacks, giving rise to groupoid objects in \ct{E}, actions on maps $X \to {\rm ob}(\mathbb{G})$ in \ct{E}, and simplicial objects $N(\mathbb{G})$ and $N(X_\mathbb{G})$ in \ct{E}. If $C^\bullet$ is a cosimplicial object in \ct{E} and \ct{E} has the necessary colimits, one can next take the ``geometric realization''
\[ - \otimes_{\mathbb{\Delta}} C^\bullet: \ct{E}^{{\mathbb{\Delta}}^{op}} \to \ct{E}: Y \mapsto Y \otimes_{\mathbb{\Delta}} C^\bullet \]
to get back to \ct{E}.

We will only be interested in the case where \ct{E} is the category $\sSets$ of simplicial sets itself and $C^\bullet$ is the cosimplicial object of standard simplices, $C^n = \Delta[n]$. In this case, $\ct{E}^{\mathbb{\Delta}^{op}}$ is the category $\bisSets$ of bisimplicial sets and the geometric realization is (isomorphic to) the \emph{diagonal}, which we write as
\[ \delta^*: \bisSets \to \sSets. \]
It sends a bisimplicial set $Y = \{ Y_{p, q} \}$ to its diagonal $\delta^*(Y)$ given by $\delta^*(Y)_n = Y_{n, n}$. We shall have occasion to use the left adjoint to $\delta^*$,
\[ \delta_!: \sSets \to \bisSets \]
which is completely determined by its effect on representables,
\[ \delta_!(\Delta[n])_{p,q} = \{ (\alpha, \beta) \, : \, \alpha \in \Delta[n]_p, \beta \in \Delta[n]_q \}. \]
If $\mathbb{G}$ is a simplicial groupoid (a groupoid object in $\sSets$) its nerve is a bisimplicial set, and the diagonal thereof is denoted
\[ B\mathbb{G} := \delta^*(N \mathbb{G}) \]
and referred to as the \emph{classifying space} of $\mathbb{G}$ (using ``space'' as synonymous for simplicial set). In particular, if $\mathbb{G}$ acts on a map $\pi: X \to {\rm ob}(\mathbb{G})$ of simplicial sets, one obtains a map
\[ \pi: B(X_\mathbb{G}) \to B(\mathbb{G}) \]
of classifying spaces. Since both nerve and diagonal preserve pullbacks, one obtains for each object $c$ in $\mathbb{G}$ a pullback of simplicial sets
\diaglab{no5}{ X_c \ar[r] \ar[d] & BX_\mathbb{G} \ar[d]^\pi \\
1 \ar[r] & B\mathbb{G} }
where $1 = \Delta[0]$ is the one-point simplicial set and $X_c = \pi^{-1}(c) \subseteq X$.

\section{Fibration properties of simplicial groupoids}

We will now list some conditions under which simplicial groupoids and their actions give rise to Kan complexes and Kan fibrations. These statements are all rather elementary and will be proved in detail.

\begin{theo}{onsimplicialgroupoids}
Let $\mathbb{G}$ be a groupoid in $\sSets$, and (for part (ii)) let $\mathbb{G}$ act from the left on $X \to {\rm ob}(\mathbb{G})$.
\begin{enumerate}
\item[(i)] If ${\rm ob}(\mathbb{G})$ is a Kan complex and $s: {\rm ar}(\mathbb{G}) \to {\rm ob}(\mathbb{G})$ is a Kan fibration then $B\mathbb{G}$ is a Kan complex.
\item[(ii)] If $\pi: X \to {\rm ob}(\mathbb{G})$ is a Kan fibration then so is $B(X_\mathbb{G}) \to B\mathbb{G}$.
\item[(iii)] If $\mathbb{G}$ is transitive, i.e., if $(s, t): {\rm ar}(\mathbb{G}) \to {\rm ob}(\mathbb{G}) \times {\rm ob}(\mathbb{G})$ is surjective, then this map $(s, t): {\rm ar}(\mathbb{G}) \to {\rm ob}(\mathbb{G}) \times {\rm ob}(\mathbb{G})$ is a Kan fibration.
\item[(iv)] If $(s, t): {\rm ar}(\mathbb{G}) \to {\rm ob}(\mathbb{G}) \times {\rm ob}(\mathbb{G})$ is a Kan fibration, then for any object $c$ of $\mathbb{G}$, i.e., any vertex $c$ in ${\rm ob}(\mathbb{G})$, there is a natural weak equivalence $\mathbb{G}(c, c) \to \Omega(B \mathbb{G}, c)$.
\end{enumerate}
\end{theo}

\begin{rema}{ontheorem}
Part (iii) is what remains for groupoids of the classical fact that simplicial groups are Kan complexes \cite{gabrielzisman67,may67}. In part (iv), $\Omega(B \mathbb{G}, c)$ denotes the loop space of $B \mathbb{G}$ at the base point $c$. It can be calculated inside simplicial sets as the evident subspace of $B\mathbb{G}^{\Delta[1]}$ in case $B \mathbb{G}$ is Kan, cf.~(i). Part (iv) is well known for simplicial groups, of course. As to the proofs, we will see that (i) and (ii) can be verified by direct inspection, while (iv) is an immediate consequence of (ii). The proof of (iii) is slightly more involved.
\end{rema}

\begin{proof}[Proof of (i).] The case $n = 1$ being trivial (since $\Delta[0] \subseteq \Delta[1]$ is a retract), choose $n \gt 1$ and $0 \leq k \leq n$, and consider an extension problem of the form
\diag{ \Lambda^k[n] \ar[r]^\varphi \ar[d] & B\mathbb{G} \\
\Delta[n] \ar@{.>}[ur]_? }
By adjointness of $\delta_!$ and $\delta^*$, this is equivalent to an extension problem of the form
\diaglab{problem}{ \delta_! \Lambda^k[n] \ar[r]^\psi \ar[d] & N\mathbb{G} \\
\delta_! \Delta[n] \ar@{.>}[ur]_? }
in the category of bisimplicial sets. Also, by symmetry of the nerve of a groupoid (cf.~\refdiag{nerve} and \refdiag{nerveagain} in Section 2 above) it suffices to consider the case $k = 0$, and represent $n$-simplices in the nerve conveniently in the form
\diaglab{nsimplinnerve}{ & & c_0 \ar[dr]^{g_n} \ar[dl]^{g_2} \ar[dll]_{g_1}  \\
c_1 & c_2 & \ldots & c_n }
Thus, the data provided by $\psi$ in \refdiag{problem} are matching faces which are diagrams in $\mathbb{G}_{n-1}$ of the form
\diaglab{datagiven}{ & & d_ic_0 \ar[dr]^{d_ig_n} \ar@{.>}[dl]^{d_ig_i} \ar[dll]_{d_ig_1}  \\
d_ic_1 & \hat{d_ic_i} & \ldots & d_ic_n }
for all $i \gt 0$, and the problem is to extend this to a diagram \refdiag{nsimplinnerve}. (The notation in \refdiag{datagiven} is suggestive and should not lead to confusion: we are given these $d_ic_j$ and $d_ig_j$, but not the $c_j$ and $g_j$ themselves.) Since ${\rm ob}(\mathbb{G})$ is assumed to be Kan, we can first extend the $d_ic_0$, $i \gt 0$, to an object $c_0$ in $\mathbb{G}_n$,
\diag{ \Lambda^0[n] \ar[d] \ar[r]^f & {\rm ob}(\mathbb{G}) \\
\Delta[n] \ar@{.>}[ur]_{c_0}, }
where $f = {\{d_ic_0\}_{i \gt 0}}$ is the map sending for each $i \gt 0$ the face opposite the $i$th vertex to $d_i c_0$.
Next, for a fixed $j \gt 0$, we are given $d_ig_j :d_ic_0 \to d_ic_j$ for all $i \not=0, j$. Let $\Lambda^{0, j}[n] \to \Delta[n]$ be the union of all the faces except the ones opposite the 0th and $j$th vertex (i.e., the union of all the faces containing these two vertices). Then $\Lambda^{0, j}[n] \to \Delta[n]$ is an anodyne extension. Since $s: {\rm ar}(\mathbb{G}) \to {\rm ob}(\mathbb{G})$ is assumed to be a fibration, we can complete the diagram
\diag{ \Lambda^{0, j}[n] \ar[r]^{ \{d_ig_j\}_i} \ar[d] & {\rm ar}(\mathbb{G}) \ar[d]^s \\
\Delta[n] \ar[r]_{c_0} \ar@{.>}[ur]^{g_j} & {\rm ob}(\mathbb{G}) }
where $j$ is fixed and $i$ ranges over all $1, \ldots, \mathit{ \hat{\jmath}}, \ldots, n$, to get a map $g_j: \Delta[n] \to {\rm ar}(\mathbb{G})$ as indicated. Doing this for each $j \gt 0$ completes the family \refdiag{datagiven} into a diagram of the form \refdiag{nsimplinnerve}. This proves part (i) of the theorem.
\end{proof}

\begin{proof}[Proof of (ii).] We have to solve a lifting problem of the form
\diaglab{problagain}{ \Lambda^k[n] \ar[d] \ar[r] & \delta^*N(X_\mathbb{G}) \ar[d] \\
\Delta[n] \ar[r] & \delta^*N(\mathbb{G}) }
for each $n \geq 0$ and each $0 \leq k \leq n$. Again by symmetry, it suffices to prove this for $k = 0$, and we can conveniently represent simplices in the nerves of the form \refdiag{nerve} and (\ref{no3}) of Section 2. So the data provided by diagram \refdiag{problagain} are an $n$-simplex $\xymatrix@1{ c_0 \ar[r]^{g_0} & c_1 \ar[r] & \ldots \ar[r]^{g_n} & c_n}$ in $N(\mathbb{G})_n$ and for each $i \gt 0$ an element $x_i \in X_{n-1}$ with $\pi(x_i) = d_i c_0$, agreeing on overlapping faces. In other words, diagram \refdiag{problagain} provides us with a commutative square
\diaglab{otherdiagr}{ \Lambda^0[n] \ar[r]^{ \{ x_i \} } \ar[d] & X \ar[d]^{\pi} \\
\Delta[n] \ar@{.>}[ur]^x \ar[r]_{c_0} & {\rm ob}(\mathbb{G}) }
Since $\pi$ is assumed to be a fibration, there exists a diagonal $x: \Delta[n] \to X$ in \refdiag{otherdiagr}, which defines an $n$-simplex $(x, c_0 \to \ldots \to c_n)$ in $N(X_\mathbb{G})$ and hence a diagonal in \refdiag{problagain}.
\end{proof}

Before proving Part (iii), we observe the following elementary properties of Kan fibrations.

\begin{lemm}{descent} {\rm (``Descent'')} Consider a pullback diagram of simplicial sets
\diag{ Y' \ar[d]_{p'} \ar@{->>}[r] & Y \ar[d]^p \\
X' \ar@{->>}[r]_f & X }
in which $f$ is surjective. If $p'$ is a Kan fibration, then so is $p$.
\end{lemm}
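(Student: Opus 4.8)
The plan is to verify the right lifting property of $p$ against the horn inclusions $\Lambda^k[n] \hookrightarrow \Delta[n]$ directly, using the surjectivity of $f$ to transfer a given lifting problem for $p$ into one for $p'$, which we can then solve by hypothesis.

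So suppose we are given a commutative square with a map $\Lambda^k[n] \to Y$ on top and a map $\Delta[n] \to X$ on the bottom, the latter amounting to an $n$-simplex $x \in X_n$. The first step is to choose a lift: since $f$ is surjective (hence surjective in degree $n$), there is some $x' \in X'_n$ with $f(x') = x$, and this gives a map $\Delta[n] \to X'$ sitting over $\Delta[n] \to X$.

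Next I would observe that the restriction of $\Delta[n] \to X'$ along $\Lambda^k[n] \hookrightarrow \Delta[n]$, together with the given map $\Lambda^k[n] \to Y$, agree after composing down to $X$ — this is exactly the commutativity of the original square — and so, by the universal property of the pullback $Y' = X' \times_X Y$, they assemble into a map $\Lambda^k[n] \to Y'$. This fits into a commutative square over $p'$ whose bottom edge is $\Delta[n] \to X'$, and since $p'$ is a Kan fibration this square admits a diagonal filler $\Delta[n] \to Y'$. Composing with the projection $Y' \to Y$ produces a map $\Delta[n] \to Y$, and a routine check using the two projections out of $Y'$ and the defining pullback square shows that this map restricts to the original $\Lambda^k[n] \to Y$ on the horn and composes with $p$ to the original $\Delta[n] \to X$, so it is the desired filler.

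I do not expect a genuine obstacle here: the whole argument is a diagram chase, and the hypothesis on $f$ enters at exactly one point, namely the choice of the lift $x'$ of the bottom $n$-simplex $x$. The only thing to be mildly careful about is that ``surjective'' is meant in the degreewise sense (equivalently, $f$ is an epimorphism of simplicial sets), so that such a lift $x'$ exists in every degree $n$ that we need.
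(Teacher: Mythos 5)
Your argument is correct and is precisely the diagram chase that the paper compresses into ``Immediate from the definitions'': lift the bottom $n$-simplex along the surjection $f$, use the pullback property to turn the horn-filling problem for $p$ into one for $p'$, solve it there, and project back. No gaps; your remark that surjectivity is used only once, to choose the lift $x'$ in the relevant degree, is exactly right.
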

\begin{proof}
Immediate from the definitions.
\end{proof}

\begin{lemm}{quotients} {\rm (``Quotients'')} In a diagram
\diag{ Z \ar@{->>}[rr]^p \ar[dr]_g & & Y \ar[dl]^f \\
& X, }
if $g = f \circ p$ is a Kan fibration and $p$ is a surjective Kan fibration, then $f$ is a Kan fibration.
\end{lemm}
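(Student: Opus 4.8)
The plan is to check directly that $f$ has the right lifting property against the horn inclusions $\iota\colon \Lambda^k[n] \hookrightarrow \Delta[n]$, reducing this to the same property for $g = f \circ p$ after first lifting the given horn in $Y$ through the surjective fibration $p$. So fix $n \geq 1$ and $0 \leq k \leq n$ and consider a commutative square consisting of maps $a\colon \Lambda^k[n] \to Y$ and $b\colon \Delta[n] \to X$ with $fa = b\iota$; we must produce $c\colon \Delta[n] \to Y$ with $c\iota = a$ and $fc = b$.

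The key step---and, I expect, the only real work---is to lift $a$ through $p$, that is, to find $\tilde a\colon \Lambda^k[n] \to Z$ with $p\tilde a = a$. For $n = 1$ this is immediate, since $\Lambda^k[1]$ is a single vertex and $\tilde a$ is just a choice of preimage under $p_0$ of the vertex $a \in Y_0$, which exists because $p$ is surjective. For $n \geq 2$ I would use two facts: first, that $p$, being a Kan fibration, has the right lifting property against every anodyne map; and second, that the inclusion $\Delta[0] \hookrightarrow \Lambda^k[n]$ of some vertex $v$ is anodyne. Granting these, one lifts $a(v) \in Y_0$ to a vertex $z \in Z_0$ by surjectivity of $p$, and then extends $z\colon \Delta[0] \to Z$ along the anodyne map $\Delta[0] \hookrightarrow \Lambda^k[n]$ against $p$, using $a$ to complete the square to $Y$; this produces the required $\tilde a$. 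The second fact is standard and elementary: $\Lambda^k[n]$ can be built up from $v$ as a finite composite of pushouts of horn inclusions, so that $\Delta[0] \hookrightarrow \Lambda^k[n]$ is anodyne. (Alternatively, $\Delta[0] \hookrightarrow \Lambda^k[n]$ is a monomorphism which is a weak equivalence---by two-out-of-three applied to $\Delta[0] \hookrightarrow \Lambda^k[n] \hookrightarrow \Delta[n]$---hence anodyne.)

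Once $\tilde a$ is available the remainder is routine. Since $g\tilde a = fp\tilde a = fa = b\iota$, the pair $(\tilde a, b)$ forms a lifting problem for the Kan fibration $g = f \circ p$ against $\iota$, and solving it yields $\tilde c\colon \Delta[n] \to Z$ with $\tilde c\iota = \tilde a$ and $g\tilde c = b$. Then $c := p\tilde c$ satisfies $c\iota = p\tilde a = a$ and $fc = fp\tilde c = g\tilde c = b$, so $c$ is the desired filler and $f$ is a Kan fibration. This argument uses all three hypotheses: the fibrancy and the surjectivity of $p$ are precisely what is needed to lift the horn $a$ into $Z$, and the fibrancy of $g$ is what then fills it. (Only surjectivity of $p$ on vertices is actually used; but for a Kan fibration that is equivalent to surjectivity in every degree, so nothing is lost.)
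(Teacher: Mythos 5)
Your proof is correct and follows essentially the same route as the paper's: lift the horn through $p$ by combining surjectivity on a vertex with the anodyne inclusion $\Delta[0]\hookrightarrow\Lambda^k[n]$, then solve the resulting lifting problem against the Kan fibration $g$ and compose with $p$. The paper uses the $k$-th vertex specifically and leaves the anodyne-ness implicit, but the argument is the same.
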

\begin{proof} (See also \cite[Proposition 4.1]{bergmoerdijk15}.) Consider a lifting problem as on the left
\begin{displaymath}
\begin{array}{ccc}
\xymatrix{ \Lambda^k[n] \ar[r]^b \ar[d]_i & Y \ar[d]^f \\
\Delta[n] \ar[r]_a \ar@{.>}[ur]^? & X } & &
\xymatrix{ \Delta[0] \ar[r] \ar[d] & Z \ar[d]^p \\
\Lambda^k[n] \ar[r]_b \ar@{.>}[ur]^c & Y }
\end{array}
\end{displaymath}
Let $k: \Delta[0] \to \Lambda^k[n]$ be the $k$-th vertex. Then by assumption, $b \circ k$ lifts to $Z$ and we can next fill the square on the right. Since $g$ is a Kan fibration and $gc = ai$, there exists a map $h: \Delta[n] \to Z$ with $gh = a$ and $hi = c$. Then $ph: \Delta[n] \to Y$ is a the required lift in the square on the left above.
\end{proof}

\begin{lemm}{lemmaonsimplgrps}
Let $H$ be a simplicial group acting freely on a simplicial set $E$. Then $E \to E/H$ is a Kan fibration.
\end{lemm}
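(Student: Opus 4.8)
The plan is to realise $E \to E/H$ as the pullback of a manifestly fibrant map and then to invoke the Descent lemma. Since colimits of simplicial sets are formed degreewise, $E/H$ is given in each degree by the orbit set $E_n/H_n$, so the quotient map $q\colon E\to E/H$ is surjective. The key observation — and the one place where \emph{freeness} of the action enters — is that the canonical comparison map
\[ H\times E \longrightarrow E\times_{E/H}E,\qquad (h,x)\longmapsto (x,\,h\cdot x) \]
is an isomorphism of simplicial sets. Indeed, in degree $n$ the equation $q_n(x)=q_n(y)$ holds precisely when $y=h\cdot x$ for some $h\in H_n$, and freeness guarantees that such an $h$ is unique; thus the comparison map is a bijection in each degree, and it is compatible with the simplicial structure since the action is a map of simplicial sets.

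Transporting the two projections of the pullback across this isomorphism, the first projection $E\times_{E/H}E\to E$ becomes the product projection $\mathrm{pr}\colon H\times E\to E$, $(h,x)\mapsto x$, while the second becomes the action map $\alpha\colon H\times E\to E$, $(h,x)\mapsto h\cdot x$, which is surjective because $\alpha(e,-)=\mathrm{id}_E$ for the unit $e$ of $H$. So I obtain a pullback square in which the left-hand leg is $\mathrm{pr}$, the top edge is $\alpha$, and the bottom edge and right-hand leg are both $q$. Now $H$ is a Kan complex: regarding the simplicial group $H$ as a groupoid with a single object, transitivity is automatic, so by Theorem~\ref{onsimplicialgroupoids}(iii) the map $(s,t)\colon H\to 1$ is a Kan fibration — this is the classical fact that simplicial groups are fibrant. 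Consequently $\mathrm{pr}\colon H\times E\to E$, being the pullback of $H\to 1$ along $E\to 1$, is a Kan fibration. Since the bottom edge $q$ of the square is surjective, Lemma~\ref{descent} applies and shows that the right-hand leg $q\colon E\to E/H$ is a Kan fibration, as required.

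I do not expect a genuine obstacle here: once the isomorphism $H\times E\cong E\times_{E/H}E$ is available the argument is purely formal, resting only on Lemma~\ref{descent} and on the fibrancy of $H$. The one point that needs care is the bookkeeping with the pullback square — one must check that, under the comparison isomorphism, the fibration $\mathrm{pr}$ really sits over the surjection $q$ in the configuration that Lemma~\ref{descent} requires (rather than over the top edge $\alpha$), and that it is indeed the bottom map of the square that is taken to be surjective.
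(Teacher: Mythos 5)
Your argument is essentially the paper's own proof: the same observation that freeness makes the shear map $H\times E\to E\times_{E/H}E$ an isomorphism, the same identification of $E\times_{E/H}E\to E$ with the projection $H\times E\to E$ (hence a pullback of $H\to\Delta[0]$), and the same application of the Descent lemma to the square with the surjection $q$ along the bottom. Your bookkeeping of which leg is which is also correct.

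There is, however, one logical slip in how you justify that $H$ is a Kan complex. You appeal to Theorem~\ref{onsimplicialgroupoids}(iii) applied to $H$ viewed as a one-object groupoid. But in the paper the proof of part (iii) \emph{uses} Lemma~\ref{lemmaonsimplgrps}: it applies the lemma to the group $H=\mathbb{G}(c,c)$ acting on $E=\mathbb{G}(-,c)$, and in the one-object case this is exactly $H$ acting on itself, so the conclusion ``$H\to H/H=\Delta[0]$ is a Kan fibration'' is precisely the statement that $H$ is Kan --- the very thing you are trying to establish. So your citation is circular within the paper's logical structure. The fix is immediate: invoke the classical theorem (Moore) that simplicial groups are Kan complexes, as the paper does (and as its Remark~\ref{ontheorem} makes explicit by describing part (iii) as ``what remains for groupoids'' of that classical fact). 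With that one citation repaired, your proof is complete and coincides with the paper's.
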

\begin{proof}
Let us write $X = E/H$ and $q: E \to X$ for the quotient map. Then the lemma simply states that the principal $H$-bundle $E \to X$ is a Kan fibration. This is well known, but here is an elementary proof. Consider the diagram
\diag{ H \ar[d] & H \times E \ar[l] \ar[r]^{\theta} \ar[d]_{\pi_2} & E \times_X E \ar[d]^{\pi_2} \ar[r] & E \ar[d]^q \\
\Delta[0] & E \ar[l] \ar[r]_{=} & E \ar[r] & X }
where $\theta$ is the isomorphism $\theta(h, e) = (h \cdot e, e)$. Since simplicial groups are Kan, $H \to \Delta[0]$ is a fibration, and hence so are its pullback $H \times E \to E$ and the isomorphic map $\pi_2: E \times_X E \to E$. By \reflemm{descent}, $q: E \to X$ is a Kan fibration.
\end{proof}

\begin{proof}[Proof of (iii).] Fix an object $c$ in $\mathbb{G}$, and write $H$ for the simplicial group $\mathbb{G}(c,c)$. Consider the pullback
\diag{ E \ar[r] \ar[d]_s & {\rm ar}(\mathbb{G}) \ar@{->>}[d]^{(s,t)} \\
{\rm ob}(\mathbb{G}) \ar[r]_(.35){(1, c)} & {\rm ob}(\mathbb{G}) \times {\rm ob}(\mathbb{G}). }
So $E = \mathbb{G}(-, c)$ is the simplicial set of arrows into $c$. The group $H$ acts on $E$ by composition, and this defines a principal $H$-bundle by assumption of surjectivity of $(s, t)$. In particular, $s: E \to {\rm ob}(\mathbb{G})$ is a Kan fibration by \reflemm{lemmaonsimplgrps}. The group $H$ also acts freely on the product $E \times E$ by the diagonal action, so by the same lemma, $E \times E \to (E \times E)/H$ is also a Kan fibration. But $(E \times E)/H$ is isomorphic to ${\rm ar}(\mathbb{G})$ by an isomorphism that fits into
\diag{ (E \times E)/H \ar[r]^{\cong} \ar[dr] & {\rm ar}(\mathbb{G}) \ar[d]^{(s, t)} \\
E \times E \ar@{->>}[u] \ar[r]_(.4){s \times s} & {\rm ob}(\mathbb{G}) \times {\rm ob}(\mathbb{G}) }
The diagonal is Kan fibration by \reflemm{quotients}, and hence so is the map on the right.
\end{proof}

\begin{proof}[Proof of (iv).] Again fix an object $c$ in $\mathbb{G}$, and now consider the pullback
\diag{ D \ar[r] \ar[d]_t & {\rm ar}(\mathbb{G}) \ar[d]^{(s, t)} \\
{\rm ob}(\mathbb{G}) \ar[r]_(.35){(s, 1)} & {\rm ob}(\mathbb{G}) \times {\rm ob}(\mathbb{G}) }
So $D = \mathbb{G}(c, -)$ is the simplicial set of arrows out of $c$. Then $t: D \to {\rm ob}(\mathbb{G})$ is a Kan fibration since $(s, t)$ is assumed to be. Composition of $\mathbb{G}$ defines a left action by $\mathbb{G}$ on $D$, so by Part (ii) of the theorem we obtain a Kan fibration $B(D_\mathbb{G}) \to B(\mathbb{G})$ with fibre $D_c = \mathbb{G}(c, c)$ (cf.~the pullback square \refdiag{no5} in Section 2). But $D_\mathbb{G}$ is the simplicial groupoid $c/\mathbb{G}$ which has an initial object. So $B(D_\mathbb{G})$ is contractible, and the fibre of $B(D_\mathbb{G}) \to B(\mathbb{G})$ is the loop space.
\end{proof}

\section{Universal Kan fibrations and univalence}

Let us call a simplicial set \emph{small} if it is countable, and a map \emph{small} if it has countable fibres. (Countability is irrelevant here, in the sense that it could be replaced by any other bound by an infinite regular cardinal.)

Note that if $f: Y \to X$ is small, then Quillen's small object argument \cite{gabrielzisman67} gives a factorisation of $f$ into an anodyne extension $Y \to Y'$ and a \emph{small} fibration $Y' \to X$, More generally, any small $f$ fits into a square
\diag{ Y \ar@{ >->}[r]^{\sim} \ar[d]_f & Y' \ar[d]^{f'} \\
X \ar@{ >->}[r]^{\sim} & X' }
with horizontal anodyne extensions and a small fibration $Y' \to X'$ into a Kan complex.

\begin{defi}{universal} A small fibration $\pi: E \to U$ is called \emph{universal} if every small fibration $Y \to X$ fits into a homotopy pullback
\diag{ Y \ar[r] \ar[d] & E \ar[d] \\
X \ar[r] & U.}
(That is, the map from $Y$ to the pullback is a (weak) homotopy equivalence.)
\end{defi}

\begin{rema}{typetheory} In the literature related to type theory, one also considers a stronger notion of universality, where every small fibration is \emph{isomorphic} (rather than homotopy equivalent) to a pullback of the universal $E \to U$. Let us call an $E \to U$ with this property \emph{strictly universal}. Obviously every strictly universal fibration is universal. But strictly universal maps are not very well behaved from the point of view of homotopy theory (cf. \refprop{propofuniversal}.(iv) below). Moreover, for several type theoretic applications of strict universality, the weaker notion defined above suffices. (This applies, for example, to the construction of models of {\bf CZF}, or to the fact that univalence implies function extensionality \cite{bergmoerdijk16}.)
\end{rema}

We now state some elementary properties of universal fibrations. We omit the proofs, which are all obvious.

\begin{prop}{propofuniversal}
\begin{enumerate}
\item[(i)] If
\diag{ E \ar[d]_p \ar[r] & E' \ar[d]^{p'} \\
U \ar[r] & U' }
is a homotopy pullback where $p$ and $p'$ are small fibrations, then $p'$ is universal whenever $p$ is.
\item[(ii)] In particular, any small universal fibration $E \to U$ can be ``completed'' into another one $E' \to U'$ with a Kan complex $U'$ as base, as in
\diag{ E \ar@{ >->}[r]^{\sim} \ar[d] & E' \ar[d] \\
U \ar@{ >->}[r]^{\sim} & U'. }
\item[(iii)] In a homotopy pullback square as in (i), if $\xymatrix@1{U \ar[r]^{\sim} & U'}$ is a weak equivalence and $U$ is Kan then $p$ is universal whenever $p'$ is.
\item[(iv)] In particular, as a property of small fibrations with a Kan complex as a base, being universal is invariant under (weak) homotopy equivalence.
\end{enumerate}
\end{prop}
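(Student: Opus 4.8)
Our plan is to reduce all four parts to two standard facts about homotopy pullbacks in simplicial sets. The first is the \emph{pasting lemma}: in a commuting diagram made of two side-by-side squares whose right-hand square is a homotopy pullback, the left-hand square is a homotopy pullback if and only if the outer rectangle is. The second is that the homotopy pullback of a weak equivalence along an arbitrary map is again a weak equivalence; a consequence we shall use repeatedly is that \emph{any} commuting square both of whose horizontal maps are weak equivalences is automatically a homotopy pullback --- indeed, the projection from the honest homotopy pullback onto the top-right corner is a homotopy base change of the bottom (weak equivalence) edge, hence a weak equivalence, and two-out-of-three applied to the comparison map finishes it.

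Granting these, part~(i) is immediate: for an arbitrary small fibration $Y \to X$, universality of $p$ presents it as a homotopy pullback of $p$ along some map $X \to U$; placing this square to the left of the given homotopy pullback square of $p$ over $p'$, both squares are homotopy pullbacks, hence so is the outer rectangle, which presents $Y \to X$ as a homotopy pullback of $p'$. Part~(ii) follows from~(i) by applying the factorisation recalled at the start of this section to the fibration $p \colon E \to U$ itself: this yields anodyne extensions $E \to E'$ and $U \to U'$ together with a small fibration $E' \to U'$ whose base is a Kan complex, and the resulting square --- having weak equivalences as its horizontal maps --- is a homotopy pullback, so~(i) makes $E' \to U'$ universal.

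Part~(iii) requires more care. Given an arbitrary small fibration $Y \to X$, universality of $p'$ presents it as a homotopy pullback of $p'$ along some $f \colon X \to U'$. The key point is that $f$ is homotopic to $v \circ g$ for some $g \colon X \to U$, with $v \colon U \to U'$ the given weak equivalence; granting this, the outer rectangle $Y \to X \xrightarrow{g} U \xrightarrow{v} U'$ is a homotopy pullback, the right-hand square (of $p$ over $p'$) is a homotopy pullback by hypothesis, and the pasting lemma then presents $Y \to X$ as a homotopy pullback of $p$. To obtain $g$, choose a fibrant replacement $U' \hookrightarrow \widehat{U'}$: since $U$ is a Kan complex, the composite $U \to \widehat{U'}$ is a weak equivalence \emph{between Kan complexes}, hence a homotopy equivalence, so every map $X \to \widehat{U'}$ --- in particular the composite of $f$ with $U' \hookrightarrow \widehat{U'}$ --- is homotopic to one factoring through $U$; as homotopy pullbacks depend only on the homotopy class of the attaching map and are unchanged by replacing $U'$ with $\widehat{U'}$, this is exactly what was needed. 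Finally, part~(iv) combines~(i) and~(iii): a (weak) homotopy equivalence between small fibrations over Kan complexes is witnessed by a commuting square with weak equivalences as horizontal maps, automatically a homotopy pullback by the second fact, and universality propagates along it forward by~(i) and backward by~(iii).

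The one step I expect to demand genuine thought is the lifting of $f$ through $v$ in part~(iii): this is precisely where the hypothesis that $U$ is Kan is spent, through the fibrant replacement of $U'$, and the statement does fail without it. Everything else is bookkeeping with the pasting lemma and with the invariance of homotopy pullbacks under weak equivalence --- consistent with the paper's remark that the proofs are all obvious.
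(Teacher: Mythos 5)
Your proof is correct; the paper in fact omits the proofs entirely (``which are all obvious''), and your argument is the standard one filling that gap, reducing everything to the pasting lemma for homotopy pullbacks and to the fact that a square with weak equivalences as horizontal edges is a homotopy pullback. You also correctly identify and handle the one genuinely non-formal point, namely lifting the classifying map $f\colon X\to U'$ through the weak equivalence $U\to U'$ up to homotopy in part~(iii), which is exactly where the Kan hypothesis on $U$ is needed.
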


Let $E \to U$ be any fibration. We write ${\rm End}(E) \to U \times U$ for the fibration constructed as the exponential $\pi_2^*(E)^{\pi_1^*(E)} \to U \times U$ in $\sSets/ U \times U$. Thus, the fibre over a pair $(x, y)$ of vertices of $U$ is $E_y^{E_x} = {\rm Hom}(E_x, E_y)$. This fibration contains a subfibration ${\rm Eq}(E) \to U \times U$ of weak equivalences between fibres. More explicitly, an $n$-simplex of ${\rm Eq}(E)$ over $(x, y): \Delta[n] \to U \times U $ is a weak equivalence $x^* E \to y^* E$ over $\Delta[n]$. (The map ${\rm Eq}(E) \to U \times U$ is again a fibration, since a map $x^* E \to y^* E$ as above is a weak equivalence iff it is one over one or all vertices of $\Delta[n]$.)

Next, consider the ``constant path'' inclusion $U \to U^{\Delta[1]}$ and its pullback along $U^{\Delta[1]} \times_U E \to U^{\Delta[1]}$. A diagonal filling in the diagram below
\diag{ U \ar@{ >->}[d] & E \ar@{ >->}[d] \ar[l] \ar[r]^1 & E \ar[d] \\
U^{\Delta[1]} & U^{\Delta[1]} \times_U E \ar[l]  \ar[r] \ar@{.>}[ur]^\nabla & U }
gives a ``connection'' $\nabla: U^{\Delta[1]} \times_U E \to E$ over $U$, or $U^{\Delta[1]} \to {\rm End}(E)$ over $U \times U$, which is easily seen to factor through ${\rm Eq}(E) \subseteq {\rm End}(E)$. The fibration $E \to U$ is said to be \emph{univalent} \cite{kapulkinetal14} if this map $U^{\Delta[1]} \to {\rm Eq}(E)$ is a weak equivalence.

\begin{rema}{conncompinthebase} If $x, y \in U$ are two vertices in the same component of $U$ then $\nabla$ provides a (zigzag of) weak equivalence(s) $E_x \to E_y$. Suppose, conversely, that there is no weak equivalence $E_x \to E_y$ if $x$ and $y$ belong to different components of $U$. Then by the long exact sequence of a fibration, univalence is equivalent to the statement that $\nabla$ induces for each base point $x_0 \in U$ a weak equivalence
\[ \nabla: \Omega(U, x_0) \to {\rm Eq}(E_{x_0}), \]
where $\Omega(U, x_0)$ is the loop space of $U$ at $x_0$ (constructed as the homotopy fibre of $U^{\Delta[1]} \to U \times U$, or simply as the fibre if $U$ is Kan) and ${\rm Eq}(E_{x_0})$ is the simplicial set of self-equivalences of $E_{x_0}$. Indeed, in order to compare the (homotopy) fibres of
\diag{ U^{\Delta[1]} \ar[dr] \ar[rr] & & {\rm Eq}(E) \ar[dl] \\
& U \times U }
we can restrict ourselves to diagonal base points, because the (homotopy) fibre of either map over $(x, y)$ is non-empty iff $x$ and $y$ belong to the same connected component.
\end{rema}

\begin{rema}{univinvariant} Univalence is invariant under (weak) homotopy equivalence, in the sense that if
\diag{ E \ar[d]_p \ar[r]^{\sim} & E' \ar[d]^{p'} \\
U \ar[r]^{\sim} & U' }
is a weak equivalence between two fibrations $p$ and $p'$, then $p$ is univalent iff $p'$ is. Vice versa, if $E \to U$ and $E' \to U'$ are two univalent universal fibrations then they fit into such a square. In this sense, ``the'' universal univalent fibration is unique up to homotopy.
\end{rema}

\section{Univalent completion}

In this section, we will show that any fibration can be embedded into a univalent one, in the following sense.

\begin{theo}{univcompl} Let $E \to U$ be a fibration. Then there exists a homotopy pullback square
\diag{ E \ar[r] \ar[d]_p & E' \ar[d]^{p'} \\
U \ar[r] & U' }
where $p'$ is univalent and $U \to U'$ is mono. Moreover, if $p$ is small then so is $p'$, and if $p$ is universal then so is $p'$.
\end{theo}

\begin{rema}{easy} It will be obvious from the construction that $p'$ is small whenever $p$ is. Moreover, universality of $p'$ follows from that of $p$ by \refprop{propofuniversal}. Note that, in contrast, any universal fibration $E \to U$ can be trivially embedded into a universal fibration which is not univalent, such as $E + Y \to U + X$ where $Y \to X$ is an arbitrary fibration satisfying the size restrictions.
\end{rema}

\begin{proof}[Proof of the Theorem.] Fix $p: E \to U$, and choose a minimal fibration inside $E$ \cite{gabrielzisman67,may67}:
\diag{ M \ar[rr]^i \ar[dr] & & E \ar[dl] \\
& U. }
Thus, $M$ is a fibrewise deformation retract of $E$. Moreover, by minimality, any weak equivalence $M_x \to M_y$ between fibres of $M$ is an isomorphism. Thus, we obtain maps
\diag{ {\rm Iso}(M) = {\rm Eq}(M) \ar[r]^(.62){\sim} & {\rm Eq}(E),}
and $M \to U$ is homotopy equivalent to $E \to U$ hence universal whenever $E \to U$ is. Thus, to prove the theorem, we might as well assume that $E \to U$ is a \emph{minimal} fibration, as we will now do.

Let $\mathbb{G}$ be the simplicial groupoid with ${\rm ob}(\mathbb{G}) = U$ and ${\rm ar}(\mathbb{G}) = {\rm Iso}(E)$. In other words, an $n$-simplex in ${\rm ar}(\mathbb{G})$ is a triple $(x, y, \alpha)$ where $x, y: \Delta[n] \to U$ and $\alpha: x^* E \to y^* E$ is an isomorphism over $\Delta[n]$. Then $\mathbb{G}$ acts on $E \to U$ in the obvious way, so we obtain a pullback square of simplicial sets
\diag{ E \ar@{ >->}[r]^j \ar[d]_p & B(E_\mathbb{G}) \ar[d]^{p'} \\
U \ar@{ >->}[r]_i & B\mathbb{G}. }
The map $p'$ on the right is a Kan fibration by \reftheo{onsimplicialgroupoids}.(ii). Moreover, since ${\rm ar}(\mathbb{G}) \to {\rm ob}(\mathbb{G}) \times {\rm ob}(\mathbb{G})$ is the Kan fibration ${\rm Iso}(E) = {\rm Eq}(E) \to U \times U$, there is weak equivalence $\mathbb{G}(x_0, x_0) \to \Omega(B\mathbb{G}, x_0)$ for any vertex $x_0$ in $U$. But $\mathbb{G}(x_0, x_0) = {\rm Iso}(E_{x_0}) = {\rm Eq}(E_{x_0})$, so this proves that $p'$ is univalent provided the condition on connected components is satisfied (cf.~\refrema{conncompinthebase} above). We conclude the proof by checking this condition: The map $i: U \to B\mathbb{G}$ is an isomorphism on vertices, and the fibre of $p'$ over a vertex $i(x)$ is $E_x$. If $x$ and $y$ are in the same connected component of $B\mathbb{G}$ then they are related by a weak equivalence provided by a ``connection'' $\nabla$ for $p'$. And conversely, if $E_x$ and $E_y$ are related by a weak equivalence, then by construction there is an arrow in $\mathbb{G}$ from $x$ to $y$, hence $x$ and $y$ are in the same connected component of $B\mathbb{G}$. This completes the proof.
\end{proof}

\begin{rema}{notminimal}
In connection with univalence in other model categories, it is perhaps of interest to remark that it is possible to avoid the use of minimal fibrations in the proof of \reftheo{univcompl}. Instead, one can use the following version of the ``group completion theorem'' from \cite{moerdijk89}, concerning a category object $\mathbb{C}$ in $\sSets$ (rather than a groupoid object $\mathbb{G}$ considered before) and an action by $\mathbb{C}$ on a map $X \to {\rm Ob}(\mathbb{C})$.
\begin{theo}{catcomplth}
If $\mathbb{C}$ acts on $X$ by weak equivalences, then the pullback square
\diag{ X_c \ar[r] \ar[d] & BX_{\mathbb{C}} \ar[d] \\
1 \ar[r] & B\mathbb{C} }
is a homotopy pullback.
\end{theo}
This is stated in \cite[Theorem 2.1]{moerdijk89} for a category object $\mathbb{C}$ with ${\rm ob}(\mathbb{C})$ discrete, but this plays no r\^ole in the proof given there. To say that $\mathbb{C}$ acts by weak equivalences simply means that for each arrow $\alpha: c \to d$ in $\mathbb{C}_0$, the induced map $X_c \to X_d$ is a weak equivalence. \reftheo{univcompl} can now be proved by applying \reftheo{catcomplth} to the category $\mathbb{C}$ defined by ${\rm ob}(\mathbb{C}) = U$ and ${\rm ar}(\mathbb{C}) = {\rm Eq}(E)$, the space of equivalences between fibres of $E \to U$.
\end{rema}

\section{Acknowledgements}

The results of this paper were presented by the second named author at workshops in Sheffield and at the Mittag-Leffler Institute in Stockholm in March and June 2015, respectively. We are especially grateful to the Institute: it was there that this paper acquired its final form.

\bibliographystyle{plain} \bibliography{hSetoids}

\begin{thebibliography}{10}

\bibitem{bergmoerdijk16}
B.~van~den Berg and I.~Moerdijk.
\newblock Exact completions of fibration categories and algebraic set theory.
\newblock In preparation, 2015.

\bibitem{bergmoerdijk15}
B.~van~den Berg and I.~Moerdijk.
\newblock W-types in homotopy type theory.
\newblock {\em Math. Structures Comput. Sci.}, 25(5):1100--1115, 2015.

\bibitem{cisinski14}
D.-C. Cisinski.
\newblock Univalent universes for elegant models of homotopy types.
\newblock arXiv:1406.0058, 2014.

\bibitem{gabrielzisman67}
P.~Gabriel and M.~Zisman.
\newblock {\em Calculus of fractions and homotopy theory}.
\newblock Ergebnisse der Mathematik und ihrer Grenzgebiete, Band 35.
  Springer-Verlag New York, Inc., New York, 1967.

\bibitem{gepnerkock15}
D.~Gepner and J.~Kock.
\newblock Univalence in locally cartesian closed infinity-categories.
\newblock arXiv:1208.1749, 2015.

\bibitem{grandis01}
M.~Grandis.
\newblock Finite sets and symmetric simplicial sets.
\newblock {\em Theory Appl. Categ.}, 8:244--252, 2001.

\bibitem{kapulkinetal14}
C.~Kapulkin, P.L Lumsdaine, and V.~Voevodsky.
\newblock The simplicial model of univalent foundations.
\newblock arXiv:1211.2851, 2014.

\bibitem{may67}
J.P. May.
\newblock {\em Simplicial objects in algebraic topology}.
\newblock Van Nostrand Mathematical Studies, No. 11. D. Van Nostrand Co., Inc.,
  Princeton, N.J.-Toronto, Ont.-London, 1967.

\bibitem{moerdijk89}
I.~Moerdijk.
\newblock Bisimplicial sets and the group-completion theorem.
\newblock In {\em Algebraic {$K$}-theory: connections with geometry and
  topology ({L}ake {L}ouise, {AB}, 1987)}, volume 279 of {\em NATO Adv. Sci.
  Inst. Ser. C Math. Phys. Sci.}, pages 225--240. Kluwer Acad. Publ.,
  Dordrecht, 1989.

\bibitem{shulman15b}
M.~Shulman.
\newblock The univalence axiom for elegant {R}eedy presheaves.
\newblock arXiv:1307.6248, 2015.

\bibitem{shulman15}
M.~Shulman.
\newblock Univalence for inverse diagrams and homotopy canonicity.
\newblock {\em Math. Structures Comput. Sci.}, 25(5):1203--1277, 2015.

\end{thebibliography}

\end{document}